\newcommand{\N}{\mathbb{N}} 
\newcommand{\Z}{\mathbb{Z}}
\newcommand{\restr}[2]{\left.\kern-\nulldelimiterspace #1 \vphantom{|} \right|_{#2}}
\newcommand{\E}{{\mathbb E}}
\newcommand*{\diff}{\mathop{}\!\mathrm{d}}
\newcommand{\SMALL}{\textstyle}
\newcommand{\wt}{\widetilde}
\newcommand{\1}{\mathbbm 1}
\renewcommand{\P }{\mathbb P}
\renewcommand{\epsilon}{\varepsilon}
\renewcommand{\phi}{\varphi}
\DeclareMathOperator*{\Po}{Po}
\numberwithin{equation}{section}
\theoremstyle{definition}
\newtheorem{thm}{Theorem}[section]
\newtheorem{lem}[thm]{Lemma}
\newtheorem{prop}[thm]{Proposition}
\newtheorem{remark}[thm]{Remark}
\newtheorem*{defin*}{Definition}
\newtheorem*{thm*}{Theorem}
\newtheorem*{cor*}{Corollary}
\newtheorem*{prop*}{Proposition}
\newtheorem*{lem*}{Lemma}
\newtheorem*{remark*}{Remark}
\newtheorem*{eg*}{Example}
\begin{document}
\title{A threshold for Poisson behavior of non-stationary product measures}
\date{}
\author{Michael Hochman\thanks{michael.hochman@mail.huji.ac.il}\enspace and Nicolò Paviato\thanks{nicolo.paviato@mail.huji.ac.il}}
\maketitle
\begin{center}
    \vspace{-4em} 
    \large 
    \textit{Einstein Institute of Mathematics, The Hebrew University of Jerusalem, Jerusalem, Israel.}
    
    \medskip 
    
    December 4, 2025

    \medskip
\end{center}

\begin{abstract}
Let $\gamma_{n}= O (\log^{-c}n)$ and let $\nu$ be the infinite product measure
whose $n$th marginal is Bernoulli$(1/2+\gamma_{n})$.
We show that $c=1/2$ is the threshold, above which $\nu$-almost every point is simply Poisson generic in the sense of Peres-Weiss, and below which
this can fail. This provides a range in which $\nu$ is singular with
respect to the uniform product measure, but $\nu$-almost every point is simply Poisson generic.
\end{abstract}

\section{\label{sec:Introduction}Introduction}

Many notions of `randomness' have been proposed for individual
infinite sequences $x\in\{-1,1\}^{\mathbb{N}}$. The simplest one is normality, introduced by Borel~\cite{Bor09} more than a hundred years ago,
which in this context means that every finite pattern $\omega\in\{-1,1\}^{k}$
appears in $x$ with asymptotic frequency $2^{-k}$, as would occur
if $x$ were a typical point for the ``uniform'' product measure
$\mu^{\N}=\prod_{n=1}^{\infty}\bigl[(1/2)\delta_{1}+(1/2)\delta_{-1}\bigr]$.

Here, we shall be concerned with the notion of simply Poisson genericity,
which was introduced by Z. Rudnik and is defined as follows. Given
$x\in\{-1,1\}^{\mathbb{N}}$, let $W_{k}$ be a uniformly sampled
random word in $\{-1,1\}^{k}$ and let $M_{k}^{x}$ denote the
(random) number of appearances of $W_{k}$ in $x$ up to time $2^{k}$:
\[
    M_k^x=\#\{1\leq j\leq2^{k}\mid x_{j}\ldots x_{j+k-1}=W_{k}\}.
\]
Then $x$ is \textit{simply Poisson generic} if $M_k^x$ converges in distribution
to a Poisson random variable with mean one (briefly, $M_k^x\xrightarrow{d}\Po(1)$), that is 
\[
    \lim_{k\rightarrow\infty}\P (M_k^x=n)=\frac{1}{e}\cdot\frac{1}{n!},
\]
for all $n\in\Z_{\ge0}$.
Throughout this paper, we sometimes omit the term ``simply'' and call this  property \textit{Poisson normality} for short. Note that the unqualified term Poisson generic has a stronger meaning in~\cite{Wei20}.

In unpublished work (see~\cite{Wei20}), Yuval Peres and Benjamin
Weiss proved the following.
\begin{itemize}
    \item If $x$ is Poisson generic, then it is normal.
    \item Almost every $x$ for the uniform product measure on $\{-1,1\}^{\mathbb{N}}$
    is Poisson normal.
    \item Normality does not imply Poisson normality.
\end{itemize}
For a long time it was an open problem to exhibit explicit examples
of simply Poisson generic sequences, but recently an example over larger alphabets was given
by~\cite{BecSac23}. We also mention~\cite{AlvBecCesMerPerWei24} which extends almost sure Poisson genericity to settings with infinite alphabets and exponentially mixing probability measures.

Since simply Poisson generic points are normal, the ergodic theorem tells
us that $\mu^{\N}$ is the only ergodic shift-invariant measure on $\{-1,1\}^{\mathbb{N}}$
that can be supported, or even give positive mass, to simply Poisson generic
points. However, one may ask about non-shift-invariant measures. The most
natural class to consider is that of product measures, 
\[
    \nu=\prod_{n=1}^{\infty}\nu_{n},
\]
where $\nu_{n}$ are non-trivial measure on $\{-1,1\}$. We parametrize
the $\nu_{n}$ using the sequence
\[\SMALL
    \gamma_{n}=\frac{1}{2}-\nu_{n}(\{-1\}),
\]
so $\nu_{n}=((1/2)-\gamma_{n})\delta_{-1}+((1/2)+\gamma_{n})\delta_{1}$.
Observe that
\begin{enumerate}[(i)]
\item\label{item:cesaro_convergence} If $\nu_{n}\rightarrow \textrm{uniform measure on $\{-1,+1\}$}$ (equivalently, $\gamma_{n}\rightarrow0$),
then $\nu$-a.e.\@~point is normal. In fact, $\nu$-almost-sure normality is characterized
by Cesaro convergence, $N^{-1}\sum_{n=1}^{N}\gamma_{n}\rightarrow0$ as $N\to\infty$. 
Since Poisson normality implies normality, the latter is a necessary condition
for $\nu$ to be supported on simply Poisson generic points.
\item By a theorem of Kakutani~\cite{Kak48}, $\nu$ and $\mu^{\N}$ are equivalent
as measures if and only if $\sum_{n=0}^{\infty}\gamma_{n}^{2}<\infty$.
In this case, $\nu$-a.e.~\@$x\in\Omega^{\N}$
is simply Poisson generic, because this is true for $\mu^{\N}$.
\end{enumerate}
Our main result is to identify a threshold, stated in terms of the
decay of $(\gamma_{n})$, which separates product measures that are
supported on simply Poisson generic points, from those that are not. It turns
out that this decay rate is far slower than the rate in Kakutani's
theorem, so we obtain product measures $\nu$ that are singular with
respect to $\mu^{\N}$, but are nonetheless supported on simply Poisson generic points. This threshold is tight.
\begin{thm}\label{thm:main_thm} 
    Suppose that 
    $\gamma_n\in(-1/2,1/2)$ and $\nu$ is the corresponding product measure. If $\gamma_n=  O (\log^{-(1/2+\delta)}n)$ for some $\delta>0$, then $\nu$-almost every~$x\in\Omega^{\N}$ is simply Poisson generic. 
    On the other hand, if $\gamma_n=\log^{-(1/2-\delta)}n$ for all large~$n$, then 
    $\nu$-almost every~$x\in\Omega^{\N}$ is \textbf{not} simply Poisson generic.
\end{thm}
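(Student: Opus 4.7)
I would use the method of (factorial) moments for the Poisson distribution: $M_k^x\xrightarrow{d}\Po(1)$ is equivalent to $\E_{W_k}[(M_k^x)_{(r)}]\to 1$ for each integer $r\ge 1$, where $(m)_{(r)}:=m(m-1)\cdots(m-r+1)$. Since $W_k$ is uniform on $\{-1,1\}^k$, averaging out $W_k$ first gives the explicit formula
\[
    Y_{k,r}(x):=\E_{W_k}[(M_k^x)_{(r)}]=2^{-k}\cdot\#\bigl\{(j_1,\ldots,j_r)\in[1,2^k]^r\text{ distinct}:x^{(j_1,k)}=\cdots=x^{(j_r,k)}\bigr\},
\]
where $x^{(j,k)}:=x_jx_{j+1}\cdots x_{j+k-1}$. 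The problem thus reduces to $\nu$-almost sure asymptotics of the block-coincidence counts $Y_{k,r}$, and both halves of the theorem can be attacked by a first- and second-moment analysis of $Y_{k,r}$ under $\nu$.

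\textbf{Positive part.} I would split the $r$-tuples in $Y_{k,r}$ into well-separated ones (pairwise distances exceeding $k$, so that the blocks $x^{(j_i,k)}$ are independent under the product measure $\nu$) and overlapping ones; the overlapping contribution is $o(1)$ by a crude count. For a well-separated tuple the match probability factorizes coordinatewise, and expanding gives
\[
    \P_\nu\bigl(x^{(j_1,k)}=\cdots=x^{(j_r,k)}\bigr)=2^{-k(r-1)}\prod_{m=0}^{k-1}\Bigl(1+4\!\!\sum_{1\le i<l\le r}\!\!\gamma_{j_i+m}\gamma_{j_l+m}+\text{higher even order in }\gamma\Bigr).
\]
Under $\gamma_n=O(\log^{-(1/2+\delta)}n)$ one has $|\gamma_n|=O(k^{-(1/2+\delta)})$ uniformly for $n\le 2^{k+1}$, so the logarithm of the product above is $O_r(k\cdot k^{-1-2\delta})=O_r(k^{-2\delta})\to 0$. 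Summing over the $\asymp 2^{kr}$ well-separated $r$-tuples and multiplying by $2^{-k}$ yields $\E_\nu[Y_{k,r}]=1+o(1)$. The analogous but heavier computation of $\E_\nu[Y_{k,r}^2]$, where pairs of $r$-tuples are partitioned by their joint overlap structure, should give a variance bound summable along a suitably fast subsequence $k_l$. Chebyshev plus Borel--Cantelli then provide $Y_{k_l,r}(x)\to 1$ $\nu$-a.s., and an interpolation between consecutive $k_l$'s, based on controlled variation of $Y_{k,r}$ as $k$ changes, extends this to the full sequence. $\nu$-a.s.\ convergence of every factorial moment then gives simple Poisson genericity.

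\textbf{Negative part.} Under $\gamma_n=\log^{-(1/2-\delta)}n$ the same factorization shows that for well-separated pairs $(j_1,j_2)\in[1,2^k]^2$ with both coordinates of order $2^k$, the sum $\sum_{m=0}^{k-1}\gamma_{j_1+m}\gamma_{j_2+m}$ is of order $k\cdot k^{-(1-2\delta)}=k^{2\delta}$, so $\P_\nu(x^{(j_1,k)}=x^{(j_2,k)})\ge 2^{-k}\exp(ck^{2\delta})$. Summing over the $\Theta(2^{2k})$ such pairs gives $\E_\nu[Y_{k,2}]\ge\exp(ck^{2\delta})$, which diverges superpolynomially in $k$. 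A Paley--Zygmund second-moment argument, whose variance input is controlled in the same way as in the positive part, then forces $Y_{k,2}(x)\to\infty$ along some subsequence for $\nu$-a.e.\ $x$, which is incompatible with $M_k^x\xrightarrow{d}\Po(1)$.

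\textbf{Main obstacle.} The technically hardest step is the quantitative control of $\E_\nu[Y_{k,r}^2]$: these sums range over $2r$-tuples of positions and require a careful classification by joint overlap pattern, combined with tight use of the hypothesis on $\gamma_n$ within each class to beat the large combinatorial factors. The subsequence-to-full-sequence interpolation is comparatively routine but not trivial, because $Y_{k,r}$ depends on $k$ through both the block length and the horizon $2^k$, which pull in opposite directions.
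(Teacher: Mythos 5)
Your approach --- the method of factorial moments, with first- and second-moment analysis of the coincidence counts $Y_{k,r}$ under $\nu$ --- is genuinely different from the paper's. The paper uses the Chen--Stein method (a theorem of Barbour--Holst--Janson) to prove \emph{annealed} total-variation convergence of $M_k$ to $\Po(1)$, then passes to the quenched statement via a McDiarmid-type concentration argument and Borel--Cantelli; for the negative direction it shows directly that $\limsup_k\P_k(M_k=0)>1/e$, by isolating words $\omega$ with a $\sqrt{k}$-excess of $-1$'s, applying the central limit theorem to $\sum_i\omega_i$, and bounding the conditional match probability $2^{-k}\sum_j P_{j,k}(\omega)$. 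Your route would be appealing if it worked, because it is more elementary, but as written it has two problems, one repairable and one fatal.

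The repairable problems are in the positive half. The assertion that $|\gamma_n|=O(k^{-(1/2+\delta)})$ \emph{uniformly for} $n\le2^{k+1}$ is false: $\gamma_n=O(\log^{-(1/2+\delta)}n)$ yields no decay at all for bounded $n$, and the $O(k^{-(1/2+\delta)})$ bound only kicks in once $n\ge 2^{\epsilon k}$. You must therefore peel off the positions $j_i<2^{\epsilon k}$ and treat them separately, as the paper does in every estimate. Also, the decisive variance bound on $Y_{k,r}$, the subsequence choice, and the interpolation between consecutive $k_\ell$'s --- which you identify as the hard part --- are only announced, not carried out.

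The fatal problem is in the negative half. You assert in the Plan that $M_k^x\xrightarrow{d}\Po(1)$ is \emph{equivalent} to $\E_{W_k}[(M_k^x)_{(r)}]\to 1$ for every $r$. That is not correct. Convergence of factorial moments to those of $\Po(1)$ is \emph{sufficient} for convergence in distribution (the Poisson law is moment-determinate), but it is not \emph{necessary}: convergence in distribution does not imply convergence of moments without uniform integrability. For example, if $M_k$ equals a $\Po(1)$ variable with probability $1-1/k$ and equals $k^2$ with probability $1/k$, then $M_k\to\Po(1)$ in distribution, yet $\E[(M_k)_{(2)}]\to\infty$. Consequently, proving that $Y_{k,2}(x)\to\infty$ along a subsequence for $\nu$-a.e.\ $x$ does \emph{not} contradict $M_k^x\xrightarrow{d}\Po(1)$, and the conclusion of your negative part does not follow from the argument given. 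To obstruct distributional convergence one must show a deviation of a \emph{bounded} functional, such as $\P(M_k^x=0)$, from its Poisson value; this is exactly what the paper does, and it is a qualitatively different estimate from blowing up an unbounded moment. As a secondary point, even the claimed a.s.\ divergence of $Y_{k,2}$ would not follow directly from Paley--Zygmund, which only gives a positive-measure event; one would additionally need to invoke the tail $0$--$1$ law to upgrade to full measure.
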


\begin{remark}
    We have stated the theorem for Poisson normality for simplicity, but it holds also for the stronger notion of Poisson genericity found in~\cite{Wei20}. Furthermore, the convergence result in Theorem~\ref{thm:main_thm} remains valid for sequences over finite alphabets $\{0,1,\dots,b-1\}$. In this broader context, the definition of Poisson normality counts the occurrences of a uniformly sampled word $W_k\in\{0,1,\dots,b-1\}^k$ within the first $b^k$ digits of a sequence $x\in\{0,1,\dots,b-1\}^\N$. For $\ell=0,\dots,b-1$, the associated measures are defined as $\nu_n(\{\ell\})=1/b+\gamma_n^{(\ell)}$, where$\{\gamma_{n}^{(\ell)}\}_{n\ge1}$ satisfies $\sum_{\ell=0}^{b-1}\gamma_n^{(\ell)}=0$ and $\gamma_n^{(\ell)}\in\bigl(-(b-1)/b,(b-1)/b\bigr)$ . The following proofs can be adapted to this setup to show that, assuming that $\max_{0\le \ell\le b-1}\gamma_n^{(\ell)}= O (\log^{-(1/2+\delta)}n)$, then $\nu$-a.e.\@~$x$ is simply Poisson generic.
\end{remark}

The remainder of the paper is organized as follows: in the next section we summarize our notation, in Section~\ref{sec:convergence} we prove the convergence result in Theorem~\ref{thm:main_thm}, while in Section~\ref{sec:non_conv} we establish tightness.

\section{\label{sec:Setup}Setup and notation}\label{sec:notation}

We let $\mathbb{N}=\{1,2,3,\ldots\}$ and for $n\in\mathbb{N}$ set $[n]=\{1,\ldots,n\}$.
Given a sequence $(\gamma_{n})_{n\in\N}$ taking values in $(-1/2,1/2)$ and $\Omega=\{-1,1\}$, we define the product measure $\nu$ on $\Omega^{\N}$
by 
\[
    \nu=\prod_{n=1}^{\infty}\nu_{n},\qquad\text{where }\nu_{n}(\{1\})=\frac{1}{2}+\gamma_{n}\qquad\text{and}\qquad\nu(\{-1\})=\frac{1}{2}-\gamma_{n}.
\]
Let $\mu^k$ denote the uniform product measure on $\Omega^k$, 
and consider $\P_k =\nu\times \mu^k$ defined on $\Omega^{\N}\times\Omega^k$. Denote by $\E_k $ the corresponding expectation.

For $1\leq j\leq2^{k}$, define the indicator random variables $I_{j}:\Omega^{\N}\times\Omega^k\rightarrow\{0,1\}$
by
\begin{equation}\label{eq:Indicator}
    I_{j}(x,\omega)=\left\{ \begin{array}{cc}
    1 & x_{j}\ldots x_{j+k-1}=\omega,\\
    0 & \text{otherwise},
    \end{array}\right.
\end{equation}
and $M_{k}:\Omega^{\N}\times\Omega^k\rightarrow\Z_{\ge0}$
by 
\begin{equation}\label{eq:Mk}\SMALL
    M_{k}(x,\omega)=\#\{1\leq i\leq2^{k}\mid x_{i}\ldots x_{i+k-1}=\omega\}=\sum_{j\in[2^k]}I_{j}(x,\omega).
\end{equation}

For $\omega\in\Omega^k$ and $j,k\ge1$, we introduce the quantity 
\begin{equation}\label{eq:Pj}
    P_{j,k}(\omega)=\prod_{i=1}^{k}(1+2\omega_{i}\gamma_{i+j-1}).
\end{equation}
Sometimes, we think of $P_{j,k}$ as a random variable on $\Omega^{\N}\times\Omega^k$.
By the independence of the random variables $\{\omega_i\gamma_{i+j-1}\}_{1\le i\le k}$, we point out that
\begin{equation}\label{eq:expectation_Pj}\SMALL
    \E_k [P_{j,k}]=\prod_{i=1}^k\E_k [1+2\omega_i\gamma_{i+j-1}]=1.
\end{equation}
We also note that for any fixed $\omega\in\Omega^k$,
\begin{equation}\label{eq:P(Ij=1)}\SMALL
    \P_k \bigl(I_j=1|\{\omega\}\bigr)=
    \prod_{i=1}^{k}\bigl(\frac{1}{2}+\omega_{i}\gamma_{i+j-1}\bigr)
    =2^{-k}P_{j,k}(\omega).
\end{equation}

Observe that, for any fixed $x\in\Omega^{\N}$, there is a unique $\omega\in\Omega^k$ such that
$I_j(x,\omega)=1$, and the probability of this $\omega$, like all
others, is $2^{-k}$; thus,
$\E_k [I_{j}]=2^{-k}$.
When $|i-j|\ge k$, the variables $I_j$ and $I_i$ are independent conditionally to $\omega\in\Omega^k$. However, the independence fails if we do not condition on $\omega$, since 
\[\SMALL
\E_k [I_iI_j]=2^{-k}\sum_{\omega\in\Omega^k}\nu\bigl(x:I_j(x,\omega)I_i(x,\omega)=1\bigr)=
2^{-3k}\sum_{\omega\in\Omega^k}P_{j,k}(\omega)P_{i,k}(\omega)
\]
is different from $\E_k [I_j]\E_k [I_i]=2^{-2k}$.

\section{\label{sec:convergence}Convergence to Poisson}

Let $\gamma_n\in(-1/2,1/2)$ be such that $\gamma_n= O (\log^{-(1/2+\delta)}n)$ for some $\delta>0$. Without loss of generality (decreasing $\delta$ if necessary), we assume that there is $n_0\ge1$ such that  
\begin{equation*}\label{eq:decay_rate}
|\gamma_n|\le\log^{-(1/2+\delta)}n,\qquad \text{for all } n\ge n_0.
\end{equation*}

We consider $M_k$ defined in~\eqref{eq:Mk} on the probability space  $(\Omega^\N\times\Omega^k,\P_k )$; the main result of this section is that~$M_{k}$  converges in distribution to a Poisson random variable with mean one.

\begin{prop}\label{prop:annealed}
\label{prop:convergence}
We have that $M_k\xrightarrow{d}\Po(1)$ as $k\to\infty$.
\end{prop}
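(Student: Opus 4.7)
My plan is to apply the method of factorial moments. Since $\Po(1)$ is determined by its moments, with all factorial moments equal to $1$, it suffices to show, for each fixed $r\ge 1$,
\[
\E_k\bigl[M_k(M_k-1)\cdots(M_k-r+1)\bigr] = \sum_{\substack{\vec{j}\in[2^k]^r \\ \text{distinct}}} \E_k\bigl[I_{j_1}\cdots I_{j_r}\bigr] \xrightarrow[k\to\infty]{} 1.
\]
I would split the sum according to whether the intervals $[j_i,j_i+k-1]$ are pairwise disjoint (\emph{separated} tuples) or some pair of them overlaps (\emph{overlapping} tuples).

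For an overlapping tuple, the presence of a pair $j_i,j_{i'}$ with gap $t<k$ forces $\omega$ to be $t$-periodic on its first $k-t$ coordinates (leaving only $2^t$ admissible words), and the constraint on $x$ becomes a match of $\omega$ on a union of intervals of total length $(r-1)k+t$. Combining $\nu_n(\{\pm 1\})\le\tfrac12+|\gamma_n|$ with the estimate $\sum_{m\in I}|\gamma_m|=O(k^{1/2-\delta})$ valid for length-$O(k)$ windows $I\subseteq[e^{k/2},2^k]$ (a consequence of $|\gamma_n|\le\log^{-(1/2+\delta)}n$) yields $\E_k[I_{j_1}\cdots I_{j_r}]\le 2^{-rk}\exp(O(k^{1/2-\delta}))$. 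Multiplying by the $O(r^2 k(2^k)^{r-1})$ overlapping tuples, and handling separately the exponentially smaller class of tuples with some $j_i<e^{k/2}$ by the trivial bound $\E_k[I_{j_1}\cdots I_{j_r}]\le 2^{-k}$, the overlapping contribution vanishes.

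For separated tuples, the disjointness of the intervals together with~\eqref{eq:P(Ij=1)} and the product structure of $\nu$ yields
\[
\E_k[I_{j_1}\cdots I_{j_r}] = 2^{-rk} A(\vec{j}), \qquad A(\vec{j}) := \E_k\Bigl[\prod_{i=1}^r P_{j_i,k}\Bigr],
\]
and expanding out using $\omega_l\in\{\pm 1\}$ uniform gives $A(\vec{j})=\prod_{l=1}^k U_l(\vec{j})$ with $U_l(\vec{j})=\sum_{S\subseteq[r],\,|S|\text{ even}} 2^{|S|}\prod_{i\in S}\gamma_{l+j_i-1}$. It remains to show $2^{-rk}\sum_{\vec{j}\in[2^k]^r} A(\vec{j})\to 1$, the separated-vs-all difference being negligible by the same overlap bounds. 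Writing $U_l=1+Q_l$ and expanding $\prod_l(1+Q_l)$ multilinearly, the leading term is $(2^k)^r$; each non-trivial term is indexed by a nonempty $L\subseteq[k]$ and even subsets $(S_l)_{l\in L}$ with $|S_l|\ge 2$, and the corresponding sum over $\vec{j}$ factorises as $\prod_{i=1}^r\sum_{j\in[2^k]}\prod_{l\in T_i}\gamma_{l+j-1}$ with $T_i=\{l\in L:i\in S_l\}$. Bounding each factor by $2^k k^{-(1/2+\delta)|T_i|}$ (splitting the $j$-sum at $e^{k/2}$) and using $\sum_i|T_i|=\sum_l|S_l|\ge 2|L|$, the total error is dominated by $\sum_{t\ge 1}\binom{k}{t}(4^r k^{-2\delta})^t\to 0$. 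This is precisely where the threshold $c=1/2$ plays its sharp role: $(1/2+\delta)\cdot 2>1$ ensures each level sums geometrically.

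The step I expect to be the main obstacle is the multilinear bookkeeping just outlined: carefully accounting for the even-cardinality constraint across the coordinates of $\vec{j}$, and absorbing contributions from initial windows where $\log j_i$ fails to be comparable with $k$ without inflating the main bound.
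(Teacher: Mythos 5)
Your approach via factorial moments is a genuinely different route from the paper's, which applies the Chen--Stein bound (Theorem~\ref{thm:Poi_approx}) and reduces everything to showing $\E_k\bigl|P_{j,k}-1\bigr|\to0$ uniformly over $j\ge2^{\epsilon k}$ (Proposition~\ref{prop:Pj_uniform}). The multilinear-expansion heuristic you isolate, that $(1/2+\delta)\cdot2>1$ makes the level sums converge, is the same mechanism the paper flags in the remark after Proposition~\ref{prop:Pj_uniform}, so the two proofs probe the same phenomenon, and the factorial-moment route is a plausible (if less quantitative) alternative.

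As written, however, the bookkeeping has gaps. The closing display is incorrect: $\sum_{t\ge1}\binom{k}{t}(4^rk^{-2\delta})^t=(1+4^rk^{-2\delta})^k-1$, which is of order $e^{4^rk^{1-2\delta}}$ and blows up for $\delta<1/2$, precisely the interesting range. Since $\sum_i|T_i|\ge2|L|$ gives a per-level factor $k^{-(1+2\delta)t}$, the bound you should be stating is $\sum_t\binom{k}{t}(4^rk^{-(1+2\delta)})^t\le e^{4^rk^{-2\delta}}-1\to0$; a factor of $k^{-t}$ has gone missing. Further, the per-tuple estimate $\E_k[I_{j_1}\cdots I_{j_r}]\le2^{-rk}\exp(O(k^{1/2-\delta}))$ rests on the union of the $r$ windows having length $(r-1)k+t$, which only holds when exactly one pair overlaps; for tuples where several windows cluster the union is shorter, $2^{-rk}$ is out of reach, and one needs a separate trade-off between the reduced union length and the smaller count of such tuples. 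Finally, replacing $\sum_{\text{separated}}A(\vec j)$ by $\sum_{\vec j\in[2^k]^r}A(\vec j)$ ``by the same overlap bounds'' does not follow, since those bounds concern $\E_k[I_{j_1}\cdots I_{j_r}]$ while here one needs to bound the $\omega$-expectation $A(\vec j)$ for non-separated (including repeated-index) tuples, a different quantity that requires its own estimate; and the factor bound $2^kk^{-(1/2+\delta)|T_i|}$ can be overtaken by the small-$j$ contribution $e^{k/2}c_0^{|T_i|}$ once $|T_i|$ grows like $k/\log k$. These defects are likely repairable, but each needs an argument you have not supplied.
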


This is commonly referred to as the \textit{annealed} case, because it involves a coupled probability space. By contrast, the \textit{quenched} scenario refers to an almost sure result on the probability space~$\Omega^\N$, corresponding precisely to the convergence statement of Theorem~\ref{thm:main_thm}.
The following proposition establishes the connection between annealed and quenched results.

\begin{prop}\label{prop:quenched}
\label{prop:quenched-implies-anealed}If $M_k\xrightarrow{d}\Po(1)$, then $\nu$-a.e.\@~$x\in\Omega^{\N}$ is
simply Poisson generic.
\begin{proof}

Using that $\nu$ is a product measure, this proof follows the same argument of Peres and Weiss, found in~\cite[Proof of Theorem 1]{AlvBecMer23}. The main tools are McDiarmid's inequality~\cite{McD89} and the Borel-Cantelli lemma.
\end{proof}
\end{prop}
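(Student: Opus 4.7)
The plan is to deduce the $\nu$-a.s.\ quenched convergence from the annealed hypothesis via a concentration inequality. For each fixed $n\in\Z_{\ge0}$, let
\[
F_k^{(n)}(x)=\mu^k\bigl(\{\omega\in\Omega^k:M_k(x,\omega)=n\}\bigr)=2^{-k}\cdot\#\{\omega\in\Omega^k:M_k(x,\omega)=n\}
\]
be the quenched probability of $\{M_k=n\}$ given $x$. By Fubini, $\int F_k^{(n)}\,d\nu=\P_k(M_k=n)$, which by hypothesis converges to $e^{-1}/n!$, so it suffices to show $F_k^{(n)}(x)\to\int F_k^{(n)}\,d\nu$ for $\nu$-a.e.\ $x$ and every fixed $n\in\Z_{\ge0}$; intersecting the countably many resulting full-measure sets then yields simple Poisson genericity for $\nu$-a.e.\ $x$.

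I would apply McDiarmid's inequality to $F_k^{(n)}$ regarded as a function of the $\nu$-independent coordinates $x_1,\ldots,x_{2^k+k-1}$. The key input is a sharp bounded-difference estimate: if $x,x'\in\Omega^{\N}$ differ only in a single coordinate $x_i\ne x'_i$, then $I_j(x,\omega)\ne I_j(x',\omega)$ can occur only for $j$ with $\max(1,i-k+1)\le j\le \min(2^k,i)$, and for each such $j$ at most two words $\omega\in\Omega^k$ have their count $M_k(\cdot,\omega)$ altered, namely $x_j\ldots x_{j+k-1}$ and $x'_j\ldots x'_{j+k-1}$. Hence at most $2k$ of the $2^k$ counts change, and
\[
\bigl|F_k^{(n)}(x)-F_k^{(n)}(x')\bigr|\le 2k\cdot 2^{-k}.
\]
With bounded-difference constants $c_i\le 2k\cdot 2^{-k}$ over $O(2^k)$ coordinates, McDiarmid yields $\nu\bigl(|F_k^{(n)}-\int F_k^{(n)}\,d\nu|>\epsilon_k\bigr)\le 2\exp(-C\epsilon_k^2\cdot 2^k/k^2)$ for an absolute constant $C>0$. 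Choosing for instance $\epsilon_k=1/k$ makes the right side summable in $k$, and the Borel-Cantelli lemma completes the argument.

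The only delicate step is the bounded-difference estimate: the naive bound $\bigl|F_k^{(n)}(x)-F_k^{(n)}(x')\bigr|\le 1$ would make $\sum_i c_i^2\asymp 2^k$ and produce no concentration at all, so extracting the $2^{-k}$ factor---by exploiting that a one-coordinate flip perturbs only $O(k)$ of the $2^k$ word-counts---is essential. Everything else (the Fubini identity, the identification of $F_k^{(n)}$ as a function of finitely many coordinates, and the Borel-Cantelli conclusion) is routine.
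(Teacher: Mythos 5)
Your proposal is correct and takes essentially the same approach the paper cites (the Peres--Weiss argument via McDiarmid's inequality and Borel--Cantelli): you correctly identify the quenched law $F_k^{(n)}$ as a function of the $O(2^k)$ independent coordinates under the product measure $\nu$, extract the crucial bounded-difference constant $2k\cdot 2^{-k}$ by noting that a one-coordinate flip perturbs $M_k(\cdot,\omega)$ for at most $2k$ words $\omega$, and then sum the resulting $\exp(-c\,2^k/k^4)$ tails. This is precisely the argument the paper defers to~\cite{AlvBecMer23}, adapted to the non-i.i.d.\ product measure $\nu$, which is exactly what the paper's proof sketch asserts.
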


By Proposition~\ref{prop:quenched} the convergence result in Theorem~\ref{thm:main_thm} follows from Proposition \ref{prop:annealed}; hence, the remainder of this section is dedicated to proving Proposition~\ref{prop:annealed}.

\subsection{A general convergence theorem applied to our setting}

To prove Proposition~\ref{prop:convergence}, we rely on a general
result on Poisson approximation,~\cite[Theorem 1.A]{BarHolJan92}  derived by the Chen-Stein method, which provides a bound on the total variation distance $d_{\text{TV}}$ (see the reference above for a definition).
We note that convergence
in total variation implies convergence in distribution. 
Given a family of random variables $\{X_j\}_{j\in J}$, we denote by $\sigma(X_j:j\in J)$ for the $\sigma$-algebra generated by such a family, that is the smallest $\sigma$-algebra for which each of the $X_j$ is measurable.

\begin{thm}\label{thm:Poi_approx}
Let $I_{1},\ldots,I_{n}$ be indicator random
variables and $S=\sum_{j\in[n]}I_{j}$. For every $j\in[n]$, let
there be given a partition $\Gamma_{j}^{s},\Gamma_{j}^{w}\subseteq[n]$
of $[n]\setminus\{j\}$, let 
\[\SMALL
    \lambda=\sum_{j\in[n]}\E [I_{j}],
\]
and let
\begin{equation*}\label{eq:etaj}
    \eta_{j}=\E \bigl|\E [I_{j}|\sigma(I_{i}:i\in\Gamma_{j}^{w})]-\E [I_{j}]\bigr|.
\end{equation*}
Then,
\[\begin{split}
    d_{TV}(S,\Po(\lambda))\le & \min\{1,\lambda^{-1}\}\biggl(\sum_{j\in[n]}\bigl(\E [I_{j}]^{2}+\sum_{i\in\Gamma_{j}^{s}}(\E [I_{j}]\E [I_{i}]+\E [I_{j}I_{i}])\biggr)\\
    & +\min\{1,\lambda^{-1/2}\}\sum_{j\in[n]}\eta_{j}.
\end{split}\]
\end{thm}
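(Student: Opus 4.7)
The plan is to invoke the Chen-Stein method for Poisson approximation, as developed by Chen and refined by Barbour, Holst and Janson. The starting point is the characterization: $Z\sim\Po(\lambda)$ if and only if $\E[\lambda f(Z+1)-Zf(Z)]=0$ for every bounded $f:\Z_{\ge0}\to\R$. For each test set $A\subseteq\Z_{\ge0}$, I would solve the Stein equation
\[
\lambda f_A(k+1)-kf_A(k)=\1_A(k)-\Po(\lambda)(A),\qquad f_A(0)=0,
\]
and invoke the standard estimates $\|f_A\|_\infty\le\min(1,\lambda^{-1/2})$ and $\|\Delta f_A\|_\infty\le\min(1,\lambda^{-1})$, where $\Delta f(k):=f(k+1)-f(k)$. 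These bounds, obtained by analyzing the explicit solution of the recursion, are the source of the two prefactors appearing in the statement.

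Since $d_{TV}(S,\Po(\lambda))=\sup_A|\E[\lambda f_A(S+1)-Sf_A(S)]|$, the task reduces to bounding this quantity uniformly in $A$. For each $j$, set $V_j=\sum_{i\in\Gamma_j^s}I_i$ and $W_j=\sum_{i\in\Gamma_j^w}I_i$, so that $S=I_j+V_j+W_j$. Using $\lambda=\sum_j\E[I_j]$ together with $\E[I_jg(S)]=\E[I_j]\E[g(S)\mid I_j=1]$, one arrives at
\[
\E[\lambda f(S+1)-Sf(S)]=\sum_j\E[I_j]\bigl(\E f(1+I_j+V_j+W_j)-\E[f(1+V_j+W_j)\mid I_j=1]\bigr).
\]
The key move is to decompose the inner difference by inserting and subtracting $\E f(1+W_j)$ and $\E[f(1+W_j)\mid I_j=1]$.

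The two resulting \emph{strong} pieces, $\E f(1+I_j+V_j+W_j)-\E f(1+W_j)$ and $\E[f(1+W_j)\mid I_j=1]-\E[f(1+V_j+W_j)\mid I_j=1]$, are controlled by the crude telescoping bound $|f(k+m)-f(k)|\le m\|\Delta f_A\|_\infty$ applied with $m=I_j+V_j$ and $m=V_j$ respectively; summing over $i\in\Gamma_j^s\cup\{j\}$, multiplying by $\E[I_j]$, and using $\E[I_j]\E[I_i\mid I_j=1]=\E[I_jI_i]$ produces exactly $\E[I_j]^2+\sum_{i\in\Gamma_j^s}(\E[I_j]\E[I_i]+\E[I_jI_i])$ as the coefficient of $\|\Delta f_A\|_\infty$. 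The remaining \emph{weak} piece, $\E f(1+W_j)-\E[f(1+W_j)\mid I_j=1]$, is rewritten as $(\E I_j)^{-1}\E[f(1+W_j)(I_j-\E[I_j])]$; since $f(1+W_j)$ is $\sigma(I_i:i\in\Gamma_j^w)$-measurable, the tower property turns this into $(\E I_j)^{-1}\E[f(1+W_j)(\E[I_j\mid\sigma(I_i:i\in\Gamma_j^w)]-\E[I_j])]$, which is bounded by $\|f_A\|_\infty\eta_j/\E[I_j]$. Multiplying back by $\E[I_j]$ and summing yields $\|f_A\|_\infty\sum_j\eta_j$. The main obstacle is the bookkeeping in the strong step: one has to be careful that the $\E[I_j]^2$ term arises precisely from the $i=j$ contribution in the first telescoping, and that the two strong pieces aggregate without double-counting before the $\min(1,\lambda^{-1})$ factor is applied.
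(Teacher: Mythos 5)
The paper does not actually prove this theorem: it is quoted verbatim as~\cite[Theorem 1.A]{BarHolJan92} and used as a black box. Your proof is a correct and complete reconstruction of the standard Chen--Stein local-dependence argument that Barbour, Holst and Janson use to establish it: you solve the Stein equation, invoke the classical uniform bounds $\|f_A\|_\infty\le\min\{1,\lambda^{-1/2}\}$ and $\|\Delta f_A\|_\infty\le\min\{1,\lambda^{-1}\}$, split $S=I_j+V_j+W_j$ with $V_j,W_j$ the sums over $\Gamma_j^s$ and $\Gamma_j^w$, control the strong pieces by telescoping with $\|\Delta f_A\|_\infty$ (the $i=j$ term giving $\E[I_j]^2$ and the identity $\E[I_j]\E[V_j\mid I_j=1]=\E[I_jV_j]$ giving the cross terms), and control the weak piece via the tower property and the $\sigma(I_i:i\in\Gamma_j^w)$-measurability of $f(1+W_j)$, producing exactly $\|f_A\|_\infty\eta_j$. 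The bookkeeping you flag as the main obstacle checks out: the two strong pieces sum to $\E[I_j]^2+\sum_{i\in\Gamma_j^s}(\E[I_j]\E[I_i]+\E[I_jI_i])$ with no double-counting. Since the paper offers no proof to compare against, there is nothing to contrast; your argument is the intended one and fills in the omitted details faithfully.
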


The sets $\Gamma_j^s,\Gamma_j^w$ partition the variables into those that are  strongly and weakly correlated with $I_j$, respectively. This is the meaning of the superscripts: "s" for strong and "w" for weak.

For a fixed $k$, we apply this with $n=2^{k}$, the indicators $I_{1},\ldots,I_{2^{k}}$ from~\eqref{eq:Indicator},
and $S=M_{k}=\sum_{j\in[2^k]}I_{j}$. Recall from Section~\ref{sec:Setup}
 that $\E_k [I_{j}]=2^{-k}$ and so
$\lambda=\sum_{j\in[2^{k}]}\E_k [I_{j}]=1.$
For $j\in[2^{k}]$ we let 
\begin{equation}\label{eq:strong_weak_dependence}
    \Gamma_{j}^{s}=\{n\in[2^{k}]\setminus\{j\}:|n-j|<k\}
    \qquad\text{and}\qquad
    \Gamma_{j}^{w}=[2^{k}]\setminus(\Gamma_{j}^{s}\cup\{j\}).
\end{equation}
Theorem~\ref{thm:Poi_approx} yields that 
\[
    d_{TV}(M_{k},\Po(1))\le\underset{A_{k}}{\underbrace{2^{-2k}\sum_{j\in[2^{k}]}(1+|\Gamma_{j}^{s}|)}}+\underset{B_{k}}{\underbrace{\sum_{j\in[2^{k}]}\sum_{i\in\Gamma_{j}^{s}}\E_k [I_{j}I_{i}]}}+\underset{C_{k}}{\underbrace{\sum_{j\in[2^{k}]}\eta_{j}}}
\]
In order to conclude that $M_{k}\xrightarrow{d}\Po(1)$, we
will show that each of the positive terms $A_{k},B_{k},C_{k}$ tend to zero as $k\to\infty$. 

\subsection[]{$A_{k}\rightarrow0$}

This is simple: by $|\Gamma_{j}^{s}|\le 2k$, we have $A_{k}\leq2^{-k}(1+2k)\to0$.

\subsection[]{$B_{k}\rightarrow0$}
\begin{lem}
\label{lem:expectation} There exists $j_{0}\in\N$
such that $\E_k [I_{i}I_{j}]<2^{-3k/2}$ for all $j_{0}\leq i<j\leq2^{k}$
satisfying $0<j-i< k$.
\end{lem}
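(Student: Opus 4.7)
The approach is to compute $\E_k[I_iI_j]$ exactly via the combinatorics of the overlap of length $d := j-i \in \{1,\ldots,k-1\}$, and then estimate the resulting sum using the decay of $(\gamma_n)$. The joint event $\{I_i(x,\omega)=I_j(x,\omega)=1\}$ requires $x_{i+l-1}=\omega_l$ and $x_{i+d+l-1}=\omega_l$ for $l=1,\ldots,k$, which is consistent only when $\omega_l=\omega_{l-d}$ for $d<l\le k$; so only $2^d$ words $\omega\in\Omega^k$ contribute, each parameterized by its first $d$ entries, and on any such $\omega$ the two constraints together pin $x_i,\ldots,x_{i+k+d-1}$ to the $d$-periodic extension $\tilde\omega$ of length $k+d$, which by~\eqref{eq:P(Ij=1)} has $\nu$-probability $2^{-(k+d)}P_{i,k+d}(\tilde\omega)$. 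Averaging additionally over the uniform $\omega\in\Omega^k$ yields the identity
\begin{equation*}
\E_k[I_iI_j]\;=\;2^{-(2k+d)}\sum_{\omega\in\Omega^d}P_{i,k+d}(\tilde\omega).
\end{equation*}

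I would then dominate each $P_{i,k+d}(\tilde\omega)$ by an exponential. Enlarging $n_0$ so that $|\gamma_n|\le 1/4$ for $n\ge n_0$, the elementary bound $|\log(1+x)|\le 2|x|$ on $[-1/2,1/2]$ gives
\begin{equation*}
P_{i,k+d}(\tilde\omega)\;\le\;\exp\Bigl(4\sum_{l=1}^{k+d}|\gamma_{i+l-1}|\Bigr)\;\le\;\exp\bigl(8k\log^{-(1/2+\delta)} i\bigr),
\end{equation*}
using $d<k$, the decay hypothesis, and the monotonicity of $n\mapsto\log^{-(1/2+\delta)}n$. Picking $j_0\ge n_0$ large enough that $8\log^{-(1/2+\delta)}j_0<\tfrac12\log 2$ will force $P_{i,k+d}(\tilde\omega)<2^{k/2}$ for every $i\ge j_0$, and summing over the $2^d$ compatible $\omega$'s gives $\E_k[I_iI_j]<2^{-(2k+d)}\cdot 2^d\cdot 2^{k/2}=2^{-3k/2}$, with $j_0$ depending only on $\delta$.

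The only genuinely nontrivial step is the derivation of the overlap identity: the factorization $\E_k[I_iI_j]=2^{-3k}\sum_\omega P_{j,k}P_{i,k}$ noted in Section~\ref{sec:notation} relies on conditional independence and is only valid for $|j-i|\ge k$, whereas in the strongly correlated regime $0<d<k$ one must fuse both occurrences into a single pattern of length $k+d$ and carefully track the resulting $d$-periodicity constraint on $\omega$, which is what produces the factor $2^d$ (rather than $2^k$) and the shortened product $P_{i,k+d}$. Once this identity is in place, the rest is routine: the decay rate $\log^{-(1/2+\delta)}n$ is exactly strong enough to keep $(k+d)|\gamma_{\max}|$ at a small fraction of $k$, so that the entire multiplicative perturbation is absorbed into the $2^{k/2}$ slack between the uniform-case value $2^{-2k}$ and the target $2^{-3k/2}$.
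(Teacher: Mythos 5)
Your proof is correct and follows essentially the same route as the paper's: the same combinatorial reduction (only the $2^{d}$ words satisfying the $d$-periodicity constraint contribute, and the two occurrences fuse into a single length-$(k+d)$ window), the same probability identity via $P_{i,k+d}(\tilde\omega)$, and the same final accounting yielding $2^{-3k/2}$. The one place you diverge is in bounding the product $P_{i,k+d}(\tilde\omega)$: the paper simply picks $j_0$ so that each factor $1+2|\gamma_n|<2^{1/4}$ for $n\ge j_0$, using only that $\gamma_n\to 0$, whereas you take logarithms and invoke the explicit $\log^{-(1/2+\delta)}$ decay rate to get $P_{i,k+d}(\tilde\omega)<2^{k/2}$. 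Both are valid, but the paper's version is slightly more economical and, as the remark following this subsection emphasizes, shows that Lemma~\ref{lem:expectation} (and the bound $B_k\to 0$) requires no quantitative decay of $(\gamma_n)$ at all — a structural point your argument somewhat obscures by deploying the hypothesis where it is not needed.
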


\begin{proof}
Since $(\gamma_n)$ is a null sequence, we let $j_{0}$ be such that $1+2\gamma_{n}<2^{1/4}$ for all $n\ge j_0$, and let $i$, $j$ be as in the statement. Arguing as in the proof of~\citep[Lemma 1]{AlvBecMer23},
let
\[
    \Omega^k_{i,j}=\{\omega\in\Omega^k\mid(\omega_{1},\ldots,\omega_{k-(j-i)})=(\omega_{j-i},\ldots,\omega_{k})\},
\]
and note that a word $\omega\in\Omega^k$ can satisfy $I_{i}(x,\omega)I_{j}(x,\omega)=1$
for some $x\in\Omega^{\N}$, only if $\omega\in\Omega^k_{i,j}$.
The elements of $\Omega^k_{i,j}$ are in bijection with their prefix
of length $j-i$, so $\mu^k(\Omega^k_{i,j})=2^{-k+(j-i)}$.

For a fixed $\omega\in\Omega^k_{i,j}$, we define $\wt{\omega}\in\Omega^{k+(j-i)}$
as the juxtaposition of two copies of $\omega$, namely
    $\wt{\omega}_h=\omega_h$ if $h\in\{1,\dots,k\}$, and $\wt{\omega}_h=\omega_{h-(j-i)}$ if $h\in\{k+1,\dots,k+(j-i)\}$.
By~$i\ge j_0$,
\begin{equation*}\begin{split}\SMALL
    \nu(x:I_{i}(x,\omega)I_{j}(x,\omega)=1)&\SMALL=
    \prod_{h=i}^{j+k-1}\bigl(1/2+\wt{\omega}_{h-i+1}\gamma_{h}\bigr)\\
    &\SMALL\le    2^{-(k+j-i)}\prod_{h=i}^{j+k-1}\bigl(1+2\gamma_{h}\bigr)\\
    &\SMALL\le 2^{-(k+j-i)}2^{1/4(k+j-i)}.
\end{split}\end{equation*}
Using that $k+j-i<2k$, it follows that 
\[
    \nu(x:I_{i}(x,\omega)I_{j}(x,\omega)=1)\le 2^{-(k/2+j-i)}.
\]
So, 
\begin{align*}
    \E_k [I_{i} I_{j}] & \SMALL=\int_{\Omega^k_{i,j}}\nu(x:I_{i}(x,\omega)I_{j}(x,\omega)=1)\diff\mu^k(\omega)\\
     & \leq\mu^k(\Omega^k_{i,j})2^{-(k/2+j-i)}=2^{-3k/2},
\end{align*}
completing the proof.
\end{proof}
To conclude the proof that $B_{k}\rightarrow0$, we use that $\E_k [I_{j}]=2^{-k}$
to get
\[
    \E_k [I_{i}I_{j}]\leq\E_k [I_{j}]=2^{-k}.
\]
Therefore, with $j_{0}$ as in Lemma~\ref{lem:expectation}, 
\begin{align*}
    B_{k} & =\sum_{j\in[2^{k}]}\sum_{i\in\Gamma_{j}^{s}}\E_k [I_{i}I_{j}]\\
    & \leq\sum_{j=1}^{j_{0}-1}k\E_k [I_{i}I_{j}]+\sum_{j=j_{0}}^{2^{k}-1}k2^{-3k/2}\\
    & \leq j_{0}k2^{-k}+k2^{-k/2},
\end{align*}
and $B_{k}\rightarrow0$ follows.

\begin{remark}
    The arguments used so far do not rely on the specific rate at which~$(\gamma_n)$ decays to zero. This property becomes crucial in the next subsection.
\end{remark}

\subsection[]{$C_{k}\to0$} 
Let $P_{j,k}$ be as in~\eqref{eq:Pj}.
The main step to prove $C_k\to0$ is the following.

\begin{prop}\label{prop:Pj_uniform}
	Let $\epsilon>0$. Then $\E_k\bigl[|P_{j,k}-1|\bigr]\to 0$ uniformly in $j\ge 2^{\epsilon k}$ as $k\to\infty$.

\begin{proof}
As $k\to\infty$, the decay rate for $(\gamma_n)$ yields uniformly in $j\ge 2^{\epsilon k}$ that
	\[
	0\le \gamma_{j}^2\le\log^{-(1+2\delta)}j
	\le \log^{-(1+2\delta)}(2^{k\epsilon})
	= O (k^{-(1+2\delta)}).
	\]
So,
\begin{equation}\label{eq:decay}\SMALL
    \sum_{i=1}^k\gamma_{i+j-1}^2= O \bigl(k^{-2\delta}\bigr).
\end{equation} 
By a first order expansion around $x=0$ of $f(x)=\log(1+x)$ and~\eqref{eq:decay}, for $j>2^{\varepsilon k}$ we have
\begin{equation}\label{eq:P_{j,k}}
\begin{split}
    P_{j,k}(\omega)\SMALL=
    \exp\Bigl\{\sum_{i=1}^k\log\bigl(1+2\omega_i\gamma_{i+j-1}\bigr)\Bigr\}
    =\exp\Bigl\{2\sum_{i=1}^k\omega_i\gamma_{i+j-1}
    + O (k^{-2\delta})\Bigr\}.
\end{split}
\end{equation}

For a fixed $\theta\in(0,1/2)$, we define 
\[\SMALL
A_{k,j}^\theta=\bigl\{\omega\in \Omega^k:
\bigl|\sum_{i=1}^k\omega_i\gamma_{i+j-1}\bigr|\le
\bigl(\sum_{i=1}^k\gamma_{i+j-1}^2\bigr)^{1/2-\theta}
\bigr\}.
\]
By~\eqref{eq:decay}, we have uniformly in
$\omega \in A_{k,j}^\theta$ and $j\ge 2^{\epsilon k}$ that
$\sum_{i=1}^k\omega_i\gamma_{i+j-1}=O(k^{-\delta(1-2\theta)})$. So, identity~\eqref{eq:P_{j,k}} yields that 
$P_{j,k}(\omega)=1+o(1)$ uniformly on $A^\theta_{k,j}$ and $j$. It follows that
\begin{equation}\label{eq:P_{j,k}_typical_set}
    \lim_{k\to\infty}\E_k \bigl[\1_{A_{k,j}^\theta}|P_{j,k}-1|\bigr]=0.
\end{equation}
In particular, this implies
\begin{equation*}
    \lim_{k\to\infty}\E_k \bigl[\1_{A_{k,j}^\theta}P_{j,k}\bigr]=1.
\end{equation*}
Since by~\eqref{eq:expectation_Pj}
$1=\E_k \bigl[\1_{A_{k,j}^\theta}P_{j,k}\bigr]+
    \E_k \bigl[\1_{\Omega^k\setminus A_{k,j}^\theta}P_{j,k}\bigr],$
it follows that
\begin{equation}\label{eq:P_{j,k}_atypical_set}
    \lim_{k\to\infty}\E_k \bigl[\1_{\Omega^k\setminus A_{k,j}^\theta}P_{j,k}\bigr]=0.
\end{equation}

Under the measure $\mu^k$, the random variables $(\omega_i\gamma_{i+j-1})_{1\le i\le k}$ are independent with mean zero and variance $\gamma_{i+j-1}^2$. 
Hence, by Chebyshev's inequality and~\eqref{eq:decay}, we get
\[\SMALL
\mu^k\bigl(\Omega^k\setminus A_{k,j}^\theta\bigr)\le \bigl(\sum_{i=1}^k\gamma_{i+j-1}^2\bigr)^{2\theta}=
 O \bigl(k^{-4\delta\theta})\bigr)\longrightarrow 0,
\]
uniformly in $j\ge 2^{\epsilon k}$, as $k\to\infty$.
Applying equation~\eqref{eq:P_{j,k}_atypical_set},
\begin{align*}
    \E_k \bigl[\1_{\Omega^k\setminus A_{k,j}^\theta}|P_{j,k}-1|\bigr]&\le
    \E_k \bigl[\1_{\Omega^k\setminus A_{k,j}^\theta}P_{j,k}\bigr]+\mu^k(\Omega^k\setminus A_{k,j}^\theta)\longrightarrow 0.
\end{align*}
Combining the latter with~\eqref{eq:P_{j,k}_typical_set},
\[
\E_k [|P_{j,k}-1|]=
\E_k \bigl[\1_{A_{k,j}^\theta}|P_{j,k}-1|\bigr]+
\E_k \bigl[\1_{\Omega^k\setminus A_{k,j}^\theta}|P_{j,k}-1|\bigr]\longrightarrow 0,
\]
which finishes the proof.
\end{proof}
\end{prop}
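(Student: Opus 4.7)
The plan is to show that $P_{j,k}$ concentrates at its mean value $\E_k[P_{j,k}]=1$ from~\eqref{eq:expectation_Pj}, by exploiting the fact that $\gamma_{i+j-1}$ is very small once $i+j-1\geq 2^{\epsilon k}$. The first move is to pass to logarithms and Taylor expand:
\[
\log P_{j,k}(\omega) = \sum_{i=1}^{k}\log\bigl(1+2\omega_{i}\gamma_{i+j-1}\bigr)
= 2\sum_{i=1}^{k}\omega_{i}\gamma_{i+j-1} + O\Bigl(\sum_{i=1}^{k}\gamma_{i+j-1}^{2}\Bigr),
\]
so the problem reduces to controlling the linear sum, which under $\mu^{k}$ is a sum of independent, bounded, mean-zero random variables.

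Using $|\gamma_{n}|\leq \log^{-(1/2+\delta)}n$, for $j\geq 2^{\epsilon k}$ I would bound $\gamma_{i+j-1}^{2}=O(k^{-(1+2\delta)})$, whence $\sum_{i=1}^{k}\gamma_{i+j-1}^{2}=O(k^{-2\delta})\to 0$ uniformly in $j$. The quadratic error above is therefore uniformly $o(1)$, yielding $P_{j,k}(\omega)=\exp\bigl(2\sum_{i=1}^{k}\omega_{i}\gamma_{i+j-1}+o(1)\bigr)$.

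Next I would split $\Omega^{k}$ into a typical set $A$ on which $|\sum_{i=1}^{k}\omega_{i}\gamma_{i+j-1}|$ is well below its standard deviation $(\sum_{i=1}^{k}\gamma_{i+j-1}^{2})^{1/2}$, concretely bounded by $(\sum_{i=1}^{k}\gamma_{i+j-1}^{2})^{1/2-\theta}$ for a small fixed $\theta\in(0,1/2)$, together with its complement. Chebyshev's inequality applied to the independent mean-zero terms $\omega_{i}\gamma_{i+j-1}$ gives $\mu^{k}(\Omega^{k}\setminus A)=O(k^{-4\delta\theta})\to 0$ uniformly in $j$, and on $A$ the linear sum is $O(k^{-\delta(1-2\theta)})=o(1)$, so $P_{j,k}=1+o(1)$ pointwise. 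Hence $\E_k\bigl[\1_{A}|P_{j,k}-1|\bigr]\to 0$.

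The main obstacle is the contribution from the complement of $A$: although its $\mu^{k}$-measure is small, $P_{j,k}$ can be very large there, and a direct Cauchy--Schwarz bound would demand a nontrivial second-moment estimate on $P_{j,k}$. The clean workaround is to use the exact identity $\E_k[P_{j,k}]=1$: writing $1=\E_k[\1_{A}P_{j,k}]+\E_k[\1_{\Omega^{k}\setminus A}P_{j,k}]$ and combining with the fact that the first term tends to $1$ by the pointwise control on $A$, we obtain $\E_k[\1_{\Omega^{k}\setminus A}P_{j,k}]\to 0$. The triangle inequality $|P_{j,k}-1|\leq P_{j,k}+1$ on the bad set, together with the smallness of $\mu^{k}(\Omega^{k}\setminus A)$, then bounds the remaining contribution and completes the argument.
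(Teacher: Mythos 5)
Your proposal follows the same approach as the paper: Taylor expansion of $\log P_{j,k}$, decomposition into the typical set $A_{k,j}^{\theta}$ with exponent $1/2-\theta$, Chebyshev's inequality for the measure of the complement, and the same key observation that the exact identity $\E_k[P_{j,k}]=1$ controls $\E_k[\1_{\Omega^k\setminus A}P_{j,k}]$ without needing a second-moment bound. The argument is correct and matches the paper's proof step by step.
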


\begin{remark}
    The exponent $1/2+\delta$ in the decay of $(\gamma_n)$ is  heuristically explained by applying of the central limit theorem to equation~\eqref{eq:P_{j,k}}. The sum of independent random variables $\sum_{i=1}^k\omega_i\gamma_{i+j-1}$  typically grows proportionally to $\bigl(\sum_{i=1}^k{\gamma_{i+j-1}^2}\bigr)^{1/2}$. Thus, the elements of $A^\theta_{k,j}$ characterize the asymptotics of $P_{j,k}$.
\end{remark}

We now can complete the proof that $C_{k}\rightarrow0$. 
For fixed
$k\ge1$ and $j\in[2^{k}]$, we let
\[
\eta_{j}=\E_k \bigl|\E_k [I_{j}|\sigma(I_{i}:i\in\Gamma_{j}^{w})]-2^{-k}\bigr|,
\]
where $\Gamma_j^w\subset[2^k]$ is from~\eqref{eq:strong_weak_dependence}. 
Consider now the  random variable $W(x,\omega)=\omega$ and let $\xi_{j}=\E_k [I_{j}-2^{k}|\mathcal{F}_{j}]$, where $\mathcal{F}_{j}=\sigma\bigl(\{I_{i}:i\in\Gamma_{j}^{w}\},W\bigr)$. Applying
 the tower property twice,
\[
    \eta_{j}=\E_k \Bigl|\E_k \bigl[\xi_{j}\bigl|\sigma(I_{i}:i\in\Gamma_{j}^{w})\bigr]\Bigr|\le\E_k \Bigl[\E_k \bigl[|\xi_{j}|\bigl|\sigma(I_{i}:i\in\Gamma_{j}^{w})\bigr]\Bigr]=\E_k |\xi_{j}|.
\]
Since $|j-i|\ge k$, the variable $I_j$ is independent of $(I_{i}:i\in\Gamma_{j}^{w})$ conditionally to $\{W=\omega\}$. Hence, by equation~\eqref{eq:P(Ij=1)}, 
\[
    \E_k [I_{j}|\mathcal{F}_{j}](x,\omega)=\P_k (I_{j}=1|W=\omega)
    =2^{-k}P_{j,k}(\omega).
\]
Therefore, $\xi_j=2^{-k}(P_{j,k}-1)$ and
\[\SMALL
C_{k} \le \sum_{j\in[2^{k}]}\E_k |\xi_{j}| = 2^{-k}\sum_{j\in[2^{k}]}\E_k 
\bigl|P_{j,k}-1\bigr|.
\]
By~\eqref{eq:expectation_Pj} we know that $\E_k [P_{j,k}]=1$, so as $k\to\infty$
\[\SMALL
    2^{-k}\sum_{j\le 2^{\epsilon k}}
    \E_k \bigl|P_{j,k}-1\bigr|\le 
    2\cdot 2^{-k(1-\epsilon)}=o(1).
\]
Hence, Proposition~\ref{prop:Pj_uniform} yields that
\[\SMALL
   C_k\le o(1)+2^{-k}\sum_{2^{\epsilon k}\le j\le 2^k}\E_k 
    \bigl|P_{j,k}-1\bigr|\to 0.
\]
This concludes the estimate for $C_{k}$ and thus
our proof of Proposition~\ref{prop:annealed}.

\section{Non-convergence}\label{sec:non_conv}

Without loss of generality, we fix $\delta\in(0,1/2), n_0\ge1$, and assume that
\[
    \gamma_{n}=\log^{-(1/2-\delta)}n,
    \qquad \text{for all } n\ge n_0.
\]
We consider $M_k$ defined in~\eqref{eq:Mk} on the probability space  $(\Omega^\N\times\Omega^k,\P_k )$; we shall show that~$M_{k}$  does \textbf{not} converge in distribution to a Poisson random variable with mean one. 
In the current section we prove this result in the annealed setting, whereas the second part of Theorem~\ref{thm:main_thm} addresses the quenched result. But since quenched convergence implies annealed convergence, this is sufficient.

Before proving the annealed case, we need to establish a few preliminary results. 
Let~$k\in\mathbb{N}$ and 
let $D_{+},D_{-}\subseteq\{1,\dots,k\}$ be sets of equal
size. For $j\ge1$, write
\[
    \Xi_j=\Xi_j(D_{+},D_{-})=\prod_{i\in D_{+}}(1+\gamma_{i+j-1})\prod_{i\in D_{-}}(1-\gamma_{i+j-1}).
\]

\begin{prop}
\label{prop:balanced-products}For any $\epsilon\in(0,1)$ there is $k_0\ge1$ such that $\Xi_j\leq1$
uniformly in $k\ge k_0$, $2^{\epsilon k}\le j\le 2^k$, and  $D_{+}$, $D_{-}$.

\begin{proof}
Let $\ell=|D_{+}|=|D_{-}|\leq k$. Because $\gamma_{n}$ is decreasing,
the product defining $\Xi_j$ can only increase if we replace each $1+\gamma_{i+j-1}$
by $1+\gamma_{j}$, and each $1-\gamma_{i+j-1}$ by $1-\gamma_{j+k}$.
Thus,
\[
    \Xi_j\leq(1+\gamma_{j})^{\ell}(1-\gamma_{j+k})^{\ell}=(1+\gamma_{j}-\gamma_{j+k}-\gamma_{j}\gamma_{j+k})^{\ell}.
\]
Let $f(x)=\log^{-(1/2-\delta)}x$, $x>0$, so that $f(n)=\gamma_n$, $n\ge1$.
Since $f$ is deceasing and concave, for $x<y$ we have $|f(x)-f(y)|\leq|x-y||f'(x)|$.
Applying this with $x=j$ , $y=j+k$, and using $j\geq2^{\varepsilon k}$
, $f'(x)=-c(1/2-\delta)(x\log^{3/2-\delta}x)^{-1}$, we get
\begin{equation*}\label{eq:derivative}
    \gamma_{j}-\gamma_{j+k}\leq k\cdot|f'(j)|= O(2^{-\varepsilon k}\cdot k^{-(1/2-\delta)}).
\end{equation*}
On the other hand, using $j,j+k\leq2^{k}+k<2^{k+1}$ for all $k$ sufficiently large, we have
$\gamma_{j}\gamma_{j+k}\geq c^2(\log2/(k+1))^{1-2\delta}$. It follows that
$1+\gamma_{j}-\gamma_{j+k-1}-\gamma_{j}\gamma_{j+k}<1$, and the same holds after raising to the $\ell$-th power,
giving us $\Xi_j$$\leq1$. This proves the statement.
\end{proof}

\end{prop}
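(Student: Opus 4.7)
The plan is to reduce the multi-factor bound to a one-dimensional numerical inequality by exploiting the monotonicity of $\gamma_n$. Since $\gamma_n = \log^{-(1/2-\delta)}n$ is positive and strictly decreasing for $n$ large, I would bound each $(1+\gamma_{i+j-1})$ by its maximum over $1\le i\le k$, namely $(1+\gamma_j)$, and each $(1-\gamma_{i+j-1})$ by $(1-\gamma_{j+k})$. Using the hypothesis $|D_+| = |D_-| = \ell$, this yields
\begin{equation*}
\Xi_j \leq (1+\gamma_j)^\ell (1-\gamma_{j+k})^\ell = \bigl(1 + \gamma_j - \gamma_{j+k} - \gamma_j\gamma_{j+k}\bigr)^\ell,
\end{equation*}
so it is enough, uniformly in $\ell$, to show that the base is at most $1$, i.e.\ that $\gamma_j - \gamma_{j+k} \leq \gamma_j \gamma_{j+k}$ for all $j \geq 2^{\epsilon k}$ once $k$ is large.

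For the left-hand side I would invoke the mean value theorem applied to $f(x) = \log^{-(1/2-\delta)}x$ on the interval $[j, j+k]$. Differentiating gives $|f'(x)| = (1/2-\delta)(x\log^{3/2-\delta}x)^{-1}$, which is decreasing in $x$; combined with $j\ge 2^{\epsilon k}$ this yields
\begin{equation*}
\gamma_j - \gamma_{j+k} \leq k\,|f'(j)| = O\bigl(2^{-\epsilon k} k^{-(1/2-\delta)}\bigr),
\end{equation*}
which is exponentially small in $k$. For the right-hand side, since $j+k \leq 2^k + k < 2^{k+1}$ for $k$ large, $\log(j+k)$ is of order $k$, so both $\gamma_j$ and $\gamma_{j+k}$ are bounded below by a constant multiple of $k^{-(1/2-\delta)}$, giving $\gamma_j\gamma_{j+k} \gtrsim k^{-(1-2\delta)}$, a polynomial decay. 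For $k$ sufficiently large the exponential bound dominates the polynomial one, establishing the required inequality uniformly in $j$.

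The entire argument is a clean comparison between two scales attached to the function $f$: the scale of its derivative (which decays like a reciprocal of the argument, hence exponentially in $k$ once $j\ge 2^{\epsilon k}$), against the scale of $f$ itself (which decays only polynomially in $k$ over the same range). The only real obstacle is ensuring that both estimates are genuinely uniform in $j\in [2^{\epsilon k},\,2^k]$; this is automatic once one notes that the upper bound $k|f'(j)|$ is monotone decreasing in $j$, while the lower bound on $\gamma_j\gamma_{j+k}$ uses only the maximal possible value of $j+k$. Everything else is a routine derivative computation and a comparison of asymptotic orders.
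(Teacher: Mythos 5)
Your proof is correct and follows essentially the same route as the paper: bound each factor by the extremal $\gamma$-values, reduce to the scalar inequality $\gamma_j - \gamma_{j+k} \le \gamma_j\gamma_{j+k}$, and compare the exponentially small increment $k|f'(j)| = O(2^{-\epsilon k}k^{-(1/2-\delta)})$ against the polynomial lower bound $\gamma_j\gamma_{j+k} \gtrsim k^{-(1-2\delta)}$. One small point in your favor: you correctly justify $\gamma_j-\gamma_{j+k}\le k|f'(j)|$ by observing $|f'|$ is decreasing (i.e.\ $f$ is convex), whereas the paper writes ``decreasing and concave,'' which is a slip, since $f(x)=\log^{-(1/2-\delta)}x$ is convex for $x>1$ and the stated inequality would fail for a decreasing concave function.
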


For $\eta>0$ and $k\ge1$, define
\begin{equation}\label{eq:atypical_omegas}\SMALL
\Omega_k^\eta  =\{\omega\in\Omega^k:\sum_{i=1}^{k}\omega_{i}<-\eta\sqrt{k}\}.
\end{equation}
When convenient, we identify $\Omega_k^\eta\subseteq\Omega^k$
with its lift $\{(x,\omega)\mid\omega\in\Omega_k^\eta\}$ to $\Omega^{\N}\times\Omega^k$. 
\begin{lem}\label{lem:bound2} 
$\P_k (\Omega_k^\eta\cap\{M_{k}\geq1\})\rightarrow0$ as $k\to\infty$.

\begin{proof}
By Fubini, it suffices to bound $\nu(x:M_{k}(x,\omega)\ge1)=\P_k (M_{k}\geq1|\{\omega\})$
uniformly in $\omega\in\Omega_k^\eta$. Since $M_{k}=\sum_{j\in[2^{k}]}I_{j}$, we get by\eqref{eq:P(Ij=1)} that for all $\omega\in\Omega^k$
\begin{equation*}\SMALL
    \nu(x:M_{k}(x,\omega)\ge1)   \leq \sum_{j\in[2^{k}]}\nu(x:I_{j}(x,\omega)=1) = 2^{-k}\sum_{j\in[2^{k}]}P_{j,k}(\omega)\label{eq:bound2-sum}
\end{equation*}

Let $\epsilon\in(0,1)$. We first claim that the sum on the right changes by $o(1)$ if we
sum over $2^{\epsilon k}\leq j\leq2^{k}$ instead of $1\leq j\leq2^{k}$.
Indeed, using $\gamma_{n}\rightarrow0$, there is $j_{0}$ such that
$1+2\gamma_{j}<2^{(1-\varepsilon)/2}$ for any $j\ge j_0$. By the fact that $\gamma_{n}\rightarrow0$,
for every fixed $j\in\mathbb{N}$ we have $\sup_{\omega\in\Omega^k}2^{-k}P_{j,k}(\omega)=o(1)$
as $k\rightarrow\infty$, so 
\[\SMALL
    \sum_{1\leq j\leq j_{0}}2^{-k}P_{j,k}(\omega)=j_{0}\cdot o(1)=o(1).
\]
Also, for all $j\ge j_0$,
\begin{align*}\SMALL
    P_{j,k}(\omega)   = \prod_{i=1}^{k}(1+2\omega_{i}\gamma_{ i+j-1}) \leq 
   2^{(1-\varepsilon)k/2},
\end{align*}
so
\[\SMALL
    2^{-k}\sum_{j_{0}\leq j\leq2^{\varepsilon k}}P_{j,k}(\omega)<2^{-k}\cdot2^{\varepsilon k}\cdot2^{(1-\varepsilon)k/2}=o(1),
\]
uniformly in $\omega\in\Omega^k$. 
Thus, we have shown that
\[\SMALL
    \nu(x:M_{k}(x,\omega)\ge1)=o(1)+2^{-k}\sum_{2^{\epsilon k}\le j\le 2^k}P_{j,k}(\omega).
\]

Let $  N_+(\omega)  =\#\{1\leq i\leq k\mid\omega_{i}=1\}$,
and let $D_{+},D_{-}\subseteq[k]$ denote the sets of positions of
the first $N_+(\omega)$ occurrences of $+1,-1$ in $\omega$, respectively.
Since $\sum_{i\in D_+\cup D_-}\omega_i=0$,  the set 
$E(\omega)=[k]\setminus(D_{+}\cup D_{-})$ 
has cardinality $|E|=\bigr|\sum_{i=1}^k\omega_i\bigr|$. Let now  $\omega\in\Omega_k^\eta$. It follows that
$\omega_{i}=-1$ for $i\in E$ and $|E|>\eta\sqrt{k}$. Since $(\gamma_n)$ is decreasing, by Proposition~\ref{prop:balanced-products},
\begin{align*}\SMALL
    P_{j,k}(\omega)=\Xi_{j}(D_{+},D_{-})\cdot\prod_{i\in E(\omega)}(1-2\gamma_{ i+j-1})\le 
    (1-2\gamma_{k+2^{k}})^{|E|},
\end{align*}
for all $k\ge1$ sufficiently large, uniformly in 
$2^{\epsilon k}\leq j\leq2^{k}$ and  $\omega\in\Omega_k^\eta$.
By $|E|>\eta\sqrt{k}$,
\begin{align*}
    2^{-k}\sum_{2^{\varepsilon k}\le j\le 2^k}P_{j,k}(\omega) & <(1-2\gamma_{k+2^{k}})^{\eta k^{1/2}}\leq\biggl(1-\frac{c'}{k^{1/2-\delta}}\biggr)^{\eta k^{1/2}},
\end{align*}
for some $c'>0$. 
Since the exponent tends to infinity faster than the denominator, the last expression tends to zero as
$k\rightarrow\infty$, as desired.
\end{proof}
\end{lem}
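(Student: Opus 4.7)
The plan is to apply Fubini to reduce the claim to a uniform upper bound on the conditional probability $\nu(x:M_k(x,\omega)\ge 1)$ for $\omega\in\Omega_k^\eta$, since $\mu^k(\Omega_k^\eta)\le 1$. Combining the union bound with identity~\eqref{eq:P(Ij=1)} gives
\[
\nu(x:M_k(x,\omega)\ge 1)\le 2^{-k}\sum_{j\in[2^k]}P_{j,k}(\omega),
\]
so it suffices to show that the right-hand side tends to zero uniformly in $\omega\in\Omega_k^\eta$.

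I would split the index set $[2^k]$ into three regimes: a bounded initial segment $j<j_0$, a middle range $j_0\le j\le 2^{\varepsilon k}$ for some small $\varepsilon>0$, and the main range $2^{\varepsilon k}\le j\le 2^k$. For the first two ranges the imbalance of $\omega$ plays no role: for each fixed $j<j_0$ one has $2^{-k}P_{j,k}(\omega)\to 0$ since $\gamma_n\to 0$, and in the middle range the crude bound $P_{j,k}(\omega)\le\prod_{i=1}^k(1+2\gamma_{i+j-1})\le 2^{(1-\varepsilon)k/2}$ (valid once $j_0$ is large enough that $1+2\gamma_n\le 2^{(1-\varepsilon)/2}$ for $n\ge j_0$) yields a total contribution of at most $2^{-k}\cdot 2^{\varepsilon k}\cdot 2^{(1-\varepsilon)k/2}=o(1)$, uniformly in $\omega$.

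The heart of the argument is the main range, where I would exploit the imbalance $\omega\in\Omega_k^\eta$ by pairing up its $+1$'s and $-1$'s. Let $D_+,D_-\subseteq[k]$ be the positions of the first $\min(N_+,k-N_+)$ occurrences of $+1$ and $-1$ respectively, and let $E=[k]\setminus(D_+\cup D_-)$; then $E$ consists solely of $-1$-positions and $|E|=\bigl|\sum_{i=1}^k\omega_i\bigr|>\eta\sqrt{k}$. Using the factorization
\[
P_{j,k}(\omega)=\Xi_j(D_+,D_-)\cdot\prod_{i\in E}(1-2\gamma_{i+j-1}),
\]
Proposition~\ref{prop:balanced-products} bounds the balanced factor by $1$ uniformly in $j\ge 2^{\varepsilon k}$, while monotonicity of $\gamma_n$ controls the residual product by $(1-2\gamma_{k+2^k})^{|E|}\le (1-2\gamma_{k+2^k})^{\eta\sqrt{k}}$.

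The conclusion follows from the asymptotic estimate $\gamma_{k+2^k}=\Theta(k^{-(1/2-\delta)})$, which makes the bound at most $(1-c'k^{-(1/2-\delta)})^{\eta\sqrt{k}}$; this tends to zero because $\sqrt{k}\cdot k^{-(1/2-\delta)}=k^\delta\to\infty$. The main obstacle I anticipate is orchestrating the three-way split so that Proposition~\ref{prop:balanced-products} can be applied cleanly: its hypothesis $j\ge 2^{\varepsilon k}$ forces the middle range to absorb a crude exponential bound, and one must verify that, even so, the factor $2^{\varepsilon k}$ of surviving terms is overpowered by $2^{-(1+\varepsilon)k/2}$. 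Once this bookkeeping is in place, the rest reduces to the elementary estimate above.
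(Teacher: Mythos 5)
Your proposal reproduces the paper's argument essentially verbatim: the same Fubini reduction, the same union bound via \eqref{eq:P(Ij=1)}, the same three-way split of the index set $[2^k]$, the same crude bounds for $j<2^{\varepsilon k}$, and the same $D_+,D_-,E$ decomposition invoking Proposition~\ref{prop:balanced-products} for the main range, concluding with $(1-c'k^{-(1/2-\delta)})^{\eta\sqrt{k}}\to 0$. This matches the paper's proof in both structure and detail, so there is nothing to add.
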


If $Y$ is Poisson with parameter $1$ then $\P_k (Y=0)=1/e$.
Thus, the next proposition shows that $M_{k}$ does not converge in
distribution to $\Po(1)$.
\begin{prop}
$\limsup_{k\rightarrow\infty}\P_k (M_{k}=0)>1/e$.

\begin{proof}
Since $M_{k}\geq0$ is integer-valued, the complement of the event
$\{M_{k}=0\}$ is $\{M_{k}\geq1\}$; we shall bound the probability of the
latter event from above. For a parameter $\eta>0$ that we shall choose
later, let $\Omega_k^\eta$ be as in~\eqref{eq:atypical_omegas} and let $\mathcal{N}$ be
a standard Gaussian. Since on the space $(\Omega^k,\mu^{k})$ the
random variables $\{\omega_i\}_{1\le i\le k}$ are i.i.d. with unitary second moment,
as $k\to\infty$ the Central limit theorem yields that
\[
\P_k ((\Omega_k^\eta)^{c})=\P_k (\mathcal{N}\ge-\eta)+o(1).
\]
Therefore, by Lemma \ref{lem:bound2},
\[
\begin{split}
    \P_k (M_{k}\ge1) &  \leq \P_k (\Omega_k^\eta\cap\{M_{k}\geq1\})+\P_k ((\Omega_k^\eta)^{c}) = o(1)+\P_k (\mathcal{N}\ge-\eta)\end{split}
\]
Since $\lim_{\eta\rightarrow0}\P_k (\mathcal{N}\geq-\eta)=\P_k (\mathcal{N}\geq0)=1/2$,
by choosing $\eta$ small enough we can ensure that $\P_k (\mathcal{N}\geq-\eta)<1-1/e$.
It then follows that
\[
    \limsup_{k\rightarrow\infty}\P_k (M_{k}\geq1)<1-\frac{1}{e},
\]
as desired.
\end{proof}
\end{prop}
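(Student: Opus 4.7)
The plan is to show that $\P_k(M_k \geq 1)$ stays bounded above, for $k$ large, by a constant strictly less than $1 - 1/e$. Taking complements this gives $\liminf_k \P_k(M_k = 0) > 1/e$, which is more than required.

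The first step is to split according to whether $\omega$ is biased towards $-1$. For a parameter $\eta > 0$ to be chosen later, and with $\Omega_k^\eta$ as in~\eqref{eq:atypical_omegas}, the starting point is the trivial decomposition
\[
    \P_k(M_k \geq 1) \leq \P_k\bigl(\Omega_k^\eta \cap \{M_k \geq 1\}\bigr) + \P_k\bigl((\Omega_k^\eta)^c\bigr).
\]
Lemma~\ref{lem:bound2} immediately kills the first term as $k\to\infty$. For the second, under $\mu^k$ the coordinates $\omega_i$ are i.i.d.\ mean-zero variance-one, so the central limit theorem yields
\[
    \P_k\bigl((\Omega_k^\eta)^c\bigr) \longrightarrow \P(\mathcal{N} \geq -\eta)
\]
as $k \to \infty$, where $\mathcal{N}$ denotes a standard Gaussian.

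Combining these, $\limsup_{k\to\infty}\P_k(M_k \geq 1) \leq \P(\mathcal{N} \geq -\eta)$ for every $\eta > 0$. Since $\lim_{\eta \to 0^+}\P(\mathcal{N} \geq -\eta) = 1/2$ and $1/2 < 1 - 1/e$, picking $\eta$ small enough produces the strict inequality $\limsup_k \P_k(M_k \geq 1) < 1 - 1/e$, which is equivalent to the claim.

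The main obstacle — showing that biased words $\omega$ contribute essentially nothing to $\{M_k \geq 1\}$ — has already been absorbed into Lemma~\ref{lem:bound2}; what remains is a routine CLT calculation plus passing to the complement. The underlying heuristic is that biasing $\omega$ towards $-1$ drives the factor $P_{j,k}(\omega) = \prod_i(1+2\omega_i\gamma_{i+j-1})$ to zero, so $\nu(x : M_k(x,\omega)\geq 1)$ is negligible uniformly in such $\omega$; the set of biased $\omega$ carries asymptotic $\mu^k$-mass arbitrarily close to $1/2$, which already exceeds $1/e$, so the $\Po(1)$ prediction at $0$ must fail.
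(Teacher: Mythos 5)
Your proof is correct and follows essentially the same route as the paper: the same decomposition of $\{M_k\geq1\}$ according to $\Omega_k^\eta$, Lemma~\ref{lem:bound2} to kill the biased part, the CLT to bound $\P_k((\Omega_k^\eta)^c)$, and the observation that $1/2 < 1-1/e$. The only (correct) addition is noting that the argument in fact yields $\liminf_k \P_k(M_k=0) > 1/e$, which is slightly stronger than the $\limsup$ stated.
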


\subsection*{Acknowledgments}
The authors would like to thank Zemer Kosloff for insightful discussions throughout the development of this work, in particular suggesting a simplification to the proof of Proposition~\ref{prop:Pj_uniform}. This research was supported by the Israel Science Foundation Grant 3056/21.

\bibliographystyle{apalike.bst}
\bibliography{References}

\end{document}